\newtheorem{theorem}{Theorem}[section]
\newtheorem{proposition}[theorem]{Proposition}
\newtheorem{corollary}[theorem]{Corollary}
\newtheorem{lemma}[theorem]{Lemma}
\theoremstyle{definition}
\newtheorem{definition}[theorem]{Definition}
\newtheorem{example}[theorem]{Example}
\newcommand{\R}{\mathbb{R}}
\newcommand{\Trop}{\mathcal{T}}
\newcommand{\C}{\mathbb{C}}
\newcommand{\Q}{\mathbb{Q}}
\newcommand{\B}{\mathcal{B}}
\newcommand{\J}{\mathcal{J}}
\newcommand{\Lin}{\mathcal{L}}
\newcommand{\calQ}{\mathcal{Q}}
\newcommand{\N}{\mathcal{N}}
\DeclareMathOperator{\Image}{Im}
\DeclareMathOperator{\rowspace}{rowspace}
\DeclareMathOperator{\rank}{rank}
\begin{document}

\begin{abstract}
  We define and study the cyclic Bergman fan of a matroid $M$, which is a simplicial polyhedral fan supported on the tropical linear space $\Trop(M)$ of $M$ and is amenable to computational purposes. It slightly refines the nested set structure on $\Trop(M)$, and its rays are in bijection with flats of $M$ which are either cyclic flats or singletons. We give a fast algorithm for calculating it, making some computational applications of tropical geometry now viable. Our C++ implementation, called TropLi, and a tool for computing vertices of Newton polytopes of A-discriminants, are both available online. 
\end{abstract}

\title{\textsf{Computing Tropical Linear Spaces}}
\author{ \textsf{Felipe Rinc\'on} }
\thanks{ \textsf{ University of California, Berkeley, Berkeley, CA, USA. felipe@math.berkeley.edu .} }
\maketitle

\section{Introduction} 

Let $A$ be an $m \times n$ complex matrix of rank $m$, with columns $\textbf{a}_1$, $\textbf{a}_2, \dotsc,$ $\textbf{a}_n \in \C^m$. We denote by $M(A)$ its associated matroid, i.e., the matroid on the ground set $[n]:=\{1,2,\dotsc,n\}$ encoding the linear dependences in $\C^m$ among the columns of $A$. The circuits of $M(A)$ are then the sets $C \subseteq [n]$ such that there is a minimal dependence among the columns of $A$ of the form $\sum_{i \in C} \lambda_i \, \textbf{a}_i = 0$. See \cite{oxley} for an introductory reference to matroid theory.

The \textbf{tropical linear space} $\Trop(M)$ of any matroid $M$ over the ground set $[n]$ is the set of vectors $v \in \R^n$ such that for any circuit $C$ of $M$, the minimum $\min \{v_i \mid i \in C\}$ is attained at least twice (i.e., there exist $j,k \in C$ distinct such that $v_j = v_k = \min \{v_i \mid i \in C\}$). In the case where $M$ is the matroid associated to a complex matrix $A \in \C^{m \times n}$, $\Trop(M)$ agrees with the tropicalization of the linear subspace $\rowspace (A) \subseteq \C^n$ (using the trivial valuation on $\C$). We will not consider in this paper tropical linear spaces obtained by tropicalizing using non-trivial valuations; for a discussion about these more general tropical linear spaces and their beautiful combinatorics the reader is invited to see \cite{speyer}.

Tropical linear spaces are one of the most basic objects in tropical geometry, and interest in them has increased substantially in the last few years. They are the local building blocks for abstract smooth tropical varieties, they play a key role in defining a well-behaved tropical intersection product, and they are central objects for studying realizability questions in tropical geometry (see \cite{diagonal}, \cite{realizationspaces}, \cite{kristin}). They are also fundamental for the study of tropicalizations of varieties obtained as the image of a linear subspace under a monomial map \cite{tropdisc}.  It is thus desirable in many situations to have an explicit description of them as polyhedral fans, i.e., as a list of polyhedral cones in $\R^n$ on which it is possible to perform different computations.

There are several natural polyhedral fan structures that can be given to the tropical linear space of a matroid $M$. In \cite{feichtnersturmfels}, Feichtner and Sturmfels described a whole family $\N$ of polyhedral fans, all of them supported on the tropical linear space $\Trop(M)$. They compared these fans to the coarsest polyhedral structure on $\Trop(M)$, called the \textbf{Bergman fan} $\B(M)$ of $M$, which is induced by the normal fan of the matroid polytope associated to $M$. The finest fan structure in the family $\N$ is called the \textbf{fine subdivision} of $\Trop(M)$. It was studied by Ardila and Klivans in \cite{ardila}, where they used it to show that the intersection of the tropical linear space $\Trop(M)$ with the ($n-1$)-dimensional unit sphere is homeomorphic to the order complex of the lattice of flats of $M$, and thus to a wedge of spheres. The coarsest fan structure in the family $\N$ is called the (coarsest) \textbf{nested set fan} of $M$, and was studied in depth in \cite{feichtnersturmfels}. In particular, Feichtner and Sturmfels proposed an algorithm for computing the nested set fan of $M$ by gluing together ``local'' tropical linear spaces. In general, their algorithm has the inconveniences of having to go over all $\rank(M)!$ possible total orders on the elements of each basis of $M$, and of performing the computation of each maximal cone in the nested set fan a multiple number of times. 

In Section \ref{sectionfan} we introduce the \textbf{cyclic Bergman fan} $\Phi(M)$ of $M$, which is a simplicial polyhedral fan also supported on the tropical linear space $\Trop(M)$. The maximal cones of $\Phi(M)$ are described using some interesting combinatorial objects that we call ``compatible pairs''. We prove that the rays of $\Phi(M)$ are in correspondence with flats of the matroid $M$ that are either cyclic flats or singletons, showing that $\Phi(M)$ is in general a little finer than the nested set fan of $M$. In Section \ref{sectionalgorithm} we present an effective algorithm for computing the cyclic Bergman fan of any matroid $M$ that overcomes the difficulties present in \cite{feichtnersturmfels}. We carry out a C++ implementation of our algorithm in the case $M$ is the matroid associated to an integer matrix $A$. The resulting software, called \texttt{TropLi}, computes tropical linear spaces with great speed. It can also be used to compute basic matroidal information about the matrix $A$, like its collection of bases, circuits, or its Tutte polynomial. \texttt{TropLi} can be obtained at the website
\begin{center}
 \url{http://math.berkeley.edu/~felipe/tropli/} . 
\end{center}
In Section \ref{sectionimplementation} we give examples of a few computations done with it and report on its performance. Finally, in Section \ref{sectiondiscriminants} we describe how our computation of tropical linear spaces can be used to compute vertices of Newton polytopes of $A$-discriminants. A C++ implementation of this procedure is also available online.

\section{The Cyclic Bergman Fan}\label{sectionfan}

In this section we introduce the cyclic Bergman fan $\Phi(M)$ of a matroid $M$. It is a simplicial polyhedral fan supported on the tropical linear space $\Trop(M)$ of $M$ amenable to computational purposes. 

Let $M$ be any rank $m$ matroid on the ground set $[n]$ having no loops and no coloops. Suppose $I \subseteq [n]$ is an independent set of the matroid $M$ and $e \in [n]$ is an element not in $I$ such that $I \cup \{e\}$ is dependent. There is a unique circuit of $M$ contained in $I \cup \{e\}$ (containing the element $e$), which is called the \textbf{fundamental circuit} $C(e,I)$ of $e$ over $I$. It can be described as 
\begin{equation}\label{eqfundcirc}
  C(e,I) = \{e\} \cup \{ i \in I \mid I - \{i\} \cup \{e\} \text{ is independent} \}.
\end{equation}

Now, let $B \subseteq [n]$ be a basis of the matroid $M$. Let $\Sigma_B \subseteq \R^n$ be the polyhedral cone consisting of all vectors $v$ that make $B$ a basis of maximal $v$-weight, i.e., such that $\sum_{i \in B} v_i$ is maximal among all bases of $M$. The set $\Trop(M)_B := \Trop(M) \cap \Sigma_B$ is called the \textbf{local tropical linear space} of $M$ around the basis $B$.

\begin{example}\label{graphical}
Consider the $3 \times 6$ matrix
  \[ A := 
  \begin{pmatrix}
  1 & 1 & 0 & 0 & 1 & 0 \\
  0 & 0 & 1 & 1 & 0 & 1 \\
  0 & 0 & 0 & 0 & -1 & -1 \\
  \end{pmatrix}.
  \]
  The matroid $M := M(A)$ is in this case a graphical matroid, namely, the cycle matroid of the graph $G$ presented in Figure \ref{figgraph}. The circuits of $M$ correspond to minimal cycles of $G$ and the bases of $M$ correspond to spanning trees of $G$.  
  \begin{figure}[h]
    \centering
    \includegraphics[height=2cm]{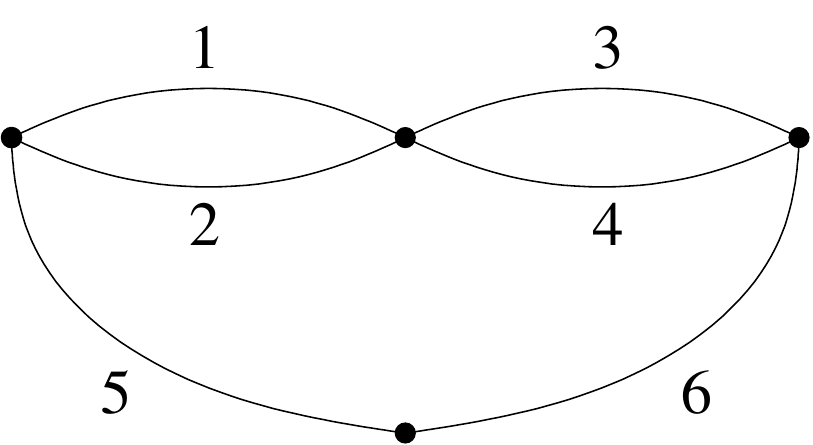}
    \caption{A graph $G$}
    \label{figgraph}
  \end{figure}
  
  \noindent The tropical linear space $\Trop(M)$ is then the set of vectors $v \in \R^6$ such that $v_1 = v_2$, $v_3= v_4$, and $\min(v_1,v_3,v_5,v_6)$ is attained twice. It is naturally a polyhedral fan with six maximal cones, corresponding to the six posibilities for the two positions where $\min(v_1,v_3,v_5,v_6)$ is attained.   For the basis $B := \{1,5,6\}$, the corresponding local tropical linear space $\Trop(M)_B$ is the set of vectors $v \in \Trop(M)$ that satisfy $v_3 = \min(v_1,v_3,v_5,v_6)$, which consists of only three of the six maximal cones described above. Note that each of these maximal cones is in several local tropical linear spaces. For example, the cone described by $v_5 \geq v_1 = v_2 = v_3 = v_4 \leq v_6$ is in the local tropical linear space corresponding to the bases $\{1,5,6\}, \{2,5,6\}, \{3,5,6\}$, and $\{4,5,6\}$. 
\end{example}

The following theorem and its corollary appear in the work of Feichtner and Sturmfels \cite{feichtnersturmfels}. They show that, although the tropical linear space $\Trop(M)$ might have a complicated combinatorial structure, all local tropical linear spaces are much faster to compute. In order to make our paper self contained, we give here a completely combinatorial proof of their result.
\begin{theorem}[\cite{feichtnersturmfels}]\label{onlyfund}
  Let $B$ be a basis of the matroid $M$. For any $v \in \Sigma_B$, $v$ is in the local tropical linear space $\Trop(M)_B$ if and only if the minimum $\min \{v_i \mid i \in C\}$ is attained at least twice for any \emph{fundamental} circuit $C$ over the basis $B$.
\end{theorem}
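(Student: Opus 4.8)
The plan is to dispose of the easy implication and then run a descent argument for the hard one. If $v \in \Trop(M)_B \subseteq \Trop(M)$ then by definition the minimum $\min\{v_i \mid i \in C\}$ is attained at least twice on \emph{every} circuit $C$, in particular on every fundamental circuit over $B$. So suppose conversely that $v \in \Sigma_B$ and that the minimum is attained at least twice on each fundamental circuit $C(e,B)$, $e \notin B$. First I would record the one consequence of $v \in \Sigma_B$ that will be used: for $e \notin B$ and $i \in C(e,B)\setminus\{e\}$ the set $(B\setminus\{i\})\cup\{e\}$ is again a basis, so maximality of the $v$-weight of $B$ forces $v_i \geq v_e$; that is, on each fundamental circuit the minimum of $v$ is attained at its unique element outside $B$.

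Call a circuit $C$ \emph{bad} if $\min\{v_i \mid i \in C\}$ is attained exactly once, say only at $j \in C$, so $v_j < v_i$ for all $i \in C\setminus\{j\}$. I want to show that no bad circuit exists. Assuming one does, pick a bad circuit $C$ with $|C \setminus B|$ minimal. A circuit is never contained in the independent set $B$, so $|C\setminus B|\geq 1$; and if $|C\setminus B|=1$ then $C$ is the unique circuit inside $B\cup\{e\}$ for the single element $e \in C\setminus B$, i.e.\ $C = C(e,B)$ is a fundamental circuit, contradicting the hypothesis. Hence $|C\setminus B|\geq 2$, and I may choose $e \in (C\setminus B)\setminus\{j\}$; note $v_e > v_j$ since $e \in C\setminus\{j\}$.

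The heart of the argument is to produce a smaller bad circuit. I would first check that $j \notin C(e,B)$: if $j\notin B$ this is clear because $C(e,B)\subseteq B\cup\{e\}$ and $j\neq e$, while if $j\in B$ then $j \in C(e,B)\setminus\{e\}$ would force $v_j\geq v_e$ by the remark above, contradicting $v_j<v_e$. Since $e$ lies in both $C$ and $C(e,B)$ and $j \in C\setminus C(e,B)$, the strong circuit elimination axiom yields a circuit $C_3$ with $j \in C_3 \subseteq (C\cup C(e,B))\setminus\{e\}$. Because $C(e,B)\setminus B=\{e\}$ and $e\in C\setminus B$, this gives $C_3\setminus B \subseteq (C\setminus B)\setminus\{e\}$, so $|C_3\setminus B|<|C\setminus B|$. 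On the other hand every element of $C_3$ other than $j$ lies in $(C\setminus\{j\})\cup(C(e,B)\setminus\{e\})$, where $v$ is $>v_j$ (strictly, since $C$ is bad) or $\geq v_e>v_j$ (by the remark about $\Sigma_B$); hence $\min\{v_i\mid i\in C_3\}=v_j$ and is attained only at $j$, so $C_3$ is itself a bad circuit. This contradicts the minimality of $|C\setminus B|$, so no bad circuit exists, and therefore $v\in\Trop(M)\cap\Sigma_B=\Trop(M)_B$.

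The one point I expect to require care is the choice of the quantity to induct on: circuit elimination does not decrease $|C|$, so a descent on $|C|$ would fail, whereas $|C\setminus B|$ does drop by at least one at each step. The role of the hypothesis $v\in\Sigma_B$ is precisely to pin the $v$-values on the fundamental circuit $C(e,B)$ down tightly enough that the new circuit $C_3$ inherits the property of having its minimum attained only once.
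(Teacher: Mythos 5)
Your proof is correct and follows essentially the same route as the paper: argue by contradiction, take a circuit violating the minimum condition with the fewest elements outside $B$, note it has at least two such elements, then apply strong circuit elimination against a fundamental circuit to produce a violating circuit with strictly fewer non-basis elements. The only difference is cosmetic (you spell out the case distinction for why the minimizing index is not in the fundamental circuit, which the paper disposes of in one inequality).
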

\begin{proof}
  Assume by contradiction that $v \in \Sigma_B$ is such that the minimum $\min \{v_i \mid i \in C\}$ is attained at least twice for all fundamental circuits $C$ over $B$, but $v$ is not in $\Trop(M)$. Let $D$ be a circuit of $M$ such that $\min \{v_i \mid i \in D\}$ is attained only once, and take $D$ containing as few elements outside of $B$ as possible. Since $D$ is not a fundamental circuit over $B$, the circuit $D$ contains at least two elements not in $B$. Let $a \in D$ be the element such that $v_a = \min \{v_i \mid i \in D\}$, and let $b \in D - B$ be different from $a$. Consider the fundamental circuit $C := C(b, B)$ of $b$ over $B$. Since $B$ is a basis of maximal $v$-weight, Equation \eqref{eqfundcirc} implies that the minimum $\min \{v_i \mid i \in C\}$ is attained at $b$, that is, $v_b \leq v_c$ for any $c \in C$. In particular, we have that $a \notin C$. Applying the strong circuit elimination axiom (see \cite[Proposition 1.4.11]{oxley}) to the circuits $C$ and $D$, with the elements $b \in C \cap D$ and $a \in D - C$, we get that there is a circuit $E \subseteq C \cup D - \{b\}$ containing the element $a$. But then $E$ is a circuit such that $\min \{v_i \mid i \in E\}$ is attained only once (at $i = a$), and $E$ has fewer elements outside of $B$ than the circuit $D$, which is a contradiction.
\end{proof}

\begin{corollary}\label{localbergman}
 Let $B = \{ b_1, b_2, \dotsc , b_m\} \subseteq [n]$ be a basis of the matroid $M$. The function $f_B:\R^m \to \R^n$ sending a vector $x = (x_1,x_2,\dotsc, x_m) \in \R^m$ to the vector $f_B(x) \in \R^n$ defined by
\[
  (f_B(x))_i := 
  \begin{cases}
    x_j & \text{if $i = b_j$ for some $j$,} \\
    \displaystyle{\min_{b_j \in C(i,B) - \{i\}} x_j} & \text{if $i \in [n] - B$;} 
  \end{cases}
\]
is a piecewise linear homeomorphism between $\R^m$ and the local tropical linear space $\Trop(M)_B$.
\end{corollary}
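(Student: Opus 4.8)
The plan is to exhibit an explicit inverse for $f_B$ and then verify the set-theoretic identity $\Image(f_B)=\Trop(M)_B$, leaving continuity and piecewise-linearity as routine bookkeeping. First I would record the standard characterization of $\Sigma_B$ coming from the matroid greedy algorithm (see \cite[Chapter~1]{oxley}): a vector $v\in\R^n$ lies in $\Sigma_B$ if and only if $v_i\le v_{b_j}$ for every $i\in[n]-B$ and every $b_j\in C(i,B)-\{i\}$. (This set $C(i,B)-\{i\}$ is nonempty because $M$ has no loops, so $f_B$ is well defined.) This characterization is already used implicitly in the proof of Theorem \ref{onlyfund}, and it makes the two inclusions below short.

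For $\Image(f_B)\subseteq\Trop(M)_B$, fix $x\in\R^m$ and set $v:=f_B(x)$. For $i\in[n]-B$ we have $v_i=\min_{b_j\in C(i,B)-\{i\}}x_j\le x_j=v_{b_j}$ for each $b_j\in C(i,B)-\{i\}$, so the greedy characterization gives $v\in\Sigma_B$. By Theorem \ref{onlyfund} it then suffices to check that for every fundamental circuit $C=C(i,B)$ the minimum $\min\{v_k:k\in C\}$ is attained at least twice; if $b_k\in C(i,B)-\{i\}$ realizes the minimum defining $v_i$, then $v_{b_k}=x_k=v_i$, while $v_{b_j}=x_j\ge v_i$ for every other $b_j\in C$, so the minimum over $C$ equals $v_i=v_{b_k}$ and is attained at both $i$ and $b_k$. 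Hence $v\in\Trop(M)_B$.

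For the reverse inclusion and the inverse, let $\pi_B:\R^n\to\R^m$ be the linear projection $v\mapsto(v_{b_1},\dots,v_{b_m})$; clearly $\pi_B\circ f_B=\mathrm{id}_{\R^m}$, so $f_B$ is injective. I would then show $f_B(\pi_B(v))=v$ for every $v\in\Trop(M)_B$, which at once proves surjectivity onto $\Trop(M)_B$ and identifies $\pi_B|_{\Trop(M)_B}$ as a two-sided inverse of $f_B$. The $B$-coordinates are immediate; for $i\in[n]-B$, the membership $v\in\Sigma_B$ yields $v_i\le\min_{b_j\in C(i,B)-\{i\}}v_{b_j}$, so the minimum of $v$ over the fundamental circuit $C(i,B)$ equals $v_i$, and the defining condition of $\Trop(M)$ forces this minimum to be attained a second time, necessarily at some $b_j\in C(i,B)-\{i\}$, giving $v_i=\min_{b_j\in C(i,B)-\{i\}}v_{b_j}=(f_B(\pi_B(v)))_i$.

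Finally, $f_B$ is continuous and piecewise linear since each of its coordinates is either a coordinate projection or a minimum of finitely many linear functions, so $f_B$ is linear on each cone of the complete fan in $\R^m$ refining the linearity domains of all these minima; its inverse $\pi_B|_{\Trop(M)_B}$ is the restriction of a linear map and hence also continuous and piecewise linear. Therefore $f_B$ is a piecewise linear homeomorphism onto $\Trop(M)_B$. The proof is essentially routine given Theorem \ref{onlyfund}; the only genuinely non-formal ingredient is the greedy characterization of $\Sigma_B$ (in particular its converse direction, used to show $\Image(f_B)\subseteq\Sigma_B$), and matching the fundamental-circuit condition to the explicit formula for $f_B$ — neither of which I expect to present a real obstacle.
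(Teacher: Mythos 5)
The paper states this corollary without proof (it is attributed to Feichtner--Sturmfels and presented as following from Theorem~\ref{onlyfund}), so there is no argument in the text to compare against. Your proof is correct and is the natural one: you exhibit the coordinate projection $\pi_B$ as a two-sided inverse, establish $\Image(f_B)\subseteq\Sigma_B$ via the converse direction of the greedy characterization of maximal-weight bases (a genuinely needed, standard matroid fact that the proof of Theorem~\ref{onlyfund} does not itself supply, and which you correctly flag as the only non-formal input), apply Theorem~\ref{onlyfund} in both directions to get $\Image(f_B)=\Trop(M)_B$ and $f_B\circ\pi_B=\mathrm{id}$ there, and close with the routine observation that a coordinatewise min of linear forms and the restriction of a linear projection are both continuous and piecewise linear. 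One small phrasing nit: the greedy characterization is not really ``used implicitly'' in Theorem~\ref{onlyfund}'s proof --- only its easy forward direction is --- but you correct this yourself in the final paragraph by singling out the converse as the extra ingredient. No gaps.
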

 
We now define the combinatorial objects that we will use to study the cyclic Bergman fan. Fix a basis $B \subseteq [n]$ of $M$. For any $k \in [n] - B$, denote $F_k := C(k,B) - \{k\}$ (note that $F_k \neq \emptyset$ since $M$ has no loops). Let $v$ be any vector in the local tropical linear space $\Trop(M)_B$, and suppose $\J$ is a total order on $B$ such that for any $a,b \in B$ we have $v_a < v_b \implies a <_{\J} b$ (note that for generic $v$ this condition determines $\J$ uniquely). This total order $\J$ induces a function $p:[n] - B \to B$ defined by $p(k) := \text{``$\J$-smallest element in $F_k$''}$. We say that $p$ is the \textbf{preference function} induced by the total order $\J$. According to Corollary \ref{localbergman}, this preference function $p$ is encoding which coordinates attain the minima described in Theorem \ref{onlyfund}, that is, $\min \{v_i \mid i \in C(k,B)\} = v_k = v_{p(k)}$ for all $k \in [n] - B$. Let $\Lin$ denote the restriction of the total order $\J$ to the image $\Image(p)$ of $p$. We call the pair $(p, \Lin)$ a \textbf{compatible pair} (with respect to the basis $B$) induced by the vector $v$. Note that a non-generic vector $v \in \Trop(M)_B$ might induce several different compatible pairs with respect to $B$, corresponding to different choices of the total order $\J$.

\begin{example}\label{preference}
 Let $A$ be the $4 \times 7$ matrix
 \[
   A :=
 \begin{pmatrix}
  1 & 0 & 0 & 0 & 0 & 3 & 1\\
  0 & 1 & 0 & 0 & 1 & 1 & 2\\
  0 & 0 & 1 & 0 & 0 & 0 & 1\\
  0 & 0 & 0 & 1 & 2 & 1 & 0
 \end{pmatrix}.
 \]
 Consider the basis $B := \{1,2,3,4\}$ of the matroid $M := M(A)$. The fundamental circuits over $B$ are $C(5,B) = \{2,4,5\}$, $C(6,B) = \{1,2,4,6\}$, and $C(7,B)= \{1,2,3,7\}$. Let $v = (0,5,2,3,3,0,0) \in \R^7$. It is not hard to see that the basis $B$ is a basis of maximal $v$-weight, so $v \in \Sigma_B$. Since the minimum $\min \{v_i \mid i \in C\}$ is attained at least twice for each fundamental circuit $C$ over the basis $B$, Theorem \ref{onlyfund} implies that $v \in \Trop(M)_B$. There is a unique total order $\J$ on the elements of $B$ satisfying $v_a < v_b \implies a <_{\J} b$, namely $1 <_\J 3 <_\J 4 <_\J 2$. The preference function $p$ induced by $\J$ is then given by $p(5)=4$, $p(6)=1$, and $p(7)=1$. The compatible pair $(p,\Lin)$ induced by $v$ in this way consists of the preference function $p$ together with the linear order $1 <_\Lin 4$. Note that a different order $\Lin$ on the image of $p$ would not be compatible with the preference function $p$. In fact, if the order $\J$ satisfied $4 <_\J 1$ then it would not be possible that $p(6)= 1$. 
\end{example}

\begin{proposition}\label{proptree}
  Let $(p,\Lin)$ be a compatible pair (with respect to the basis $B$). The set of vectors $v$ in the local tropical linear space $\Trop(M)_B$ that induce the pair $(p,\Lin)$ is an $m$-dimensional polyhedral cone $\Gamma(p,\Lin) \subseteq \R^n$. Its lineality space is generated by the vector $(1,1,\dotsc,1) \in \R^n$. After modding out by this lineality space, the cone $\Gamma(p,\Lin)$ is a simplicial polyhedral cone whose extremal rays can all be taken to be $0/1$ vectors. 
\end{proposition}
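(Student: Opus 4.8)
The plan is to transport the statement to $\R^m$ through the parametrization $f_B\colon\R^m\to\Trop(M)_B$ of Corollary~\ref{localbergman}, and to understand the preimage $\Gamma'(p,\Lin):=f_B^{-1}\bigl(\Gamma(p,\Lin)\bigr)\subseteq\R^m$ directly. I will describe $\Gamma'(p,\Lin)$ by inequalities of the form $x_a\le x_b$, which exhibits it as a polyhedral cone; on this cone every minimum appearing in the definition of $f_B$ turns out to be attained at the ``preferred'' index $p(i)$, so $f_B$ restricts there to the linear injection sending $x$ to the vector with entries $x_j$ at $b_j$ and $x_{p(i)}$ at $i\notin B$ (which sends $(1,\dots,1)\in\R^m$ to $(1,\dots,1)\in\R^n$). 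Consequently $\Gamma(p,\Lin)=f_B\bigl(\Gamma'(p,\Lin)\bigr)$ is a polyhedral cone linearly isomorphic to $\Gamma'(p,\Lin)$, and all the assertions reduce to corresponding statements about $\Gamma'(p,\Lin)\subseteq\R^m$.

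The first task is to pin down $\Gamma'(p,\Lin)$: I claim $v=f_B(x)$ induces $(p,\Lin)$ precisely when
\begin{equation*}
  x_{p(k)}\le x_j\ \text{ for all }k\in[n]-B\text{ and }b_j\in F_k,\qquad\text{and}\qquad x_s\le x_{s'}\ \text{ whenever }s<_{\Lin}s'.
\end{equation*}
The forward implication is immediate, since any total order $\J$ on $B$ refining the weak order of $x$ must place $p(k)$ below all other elements of $F_k$ and must extend $\Lin$ on $\Image(p)$. For the converse I would build a single such $\J$, with preference function $p$ and with $\J|_{\Image(p)}=\Lin$, by breaking each tie of $x$ so that the elements of $\Image(p)$ inside that tie class come first, ordered according to $\Lin$, and the remaining elements follow in any order. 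The verification that $p$ is indeed the preference function of this $\J$ hinges on the single genuinely structural feature of compatible pairs: if $b_j\in F_k$ lies in $\Image(p)$, then $p(k)<_{\Lin}b_j$. This holds because $(p,\Lin)$ is, by definition, induced by some vector and hence by some total order $\J_0$ for which $p(k)$ is the $\J_0$-least element of $F_k$, while $\Lin=\J_0|_{\Image(p)}$.

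Next I would reduce this inequality description to a tree of exactly $m-1$ inequalities. Writing $\Image(p)=\{s_1<_{\Lin}\cdots<_{\Lin}s_r\}$, one checks: the chain $x_{s_1}\le\cdots\le x_{s_r}$ is among the constraints; any constraint $x_{p(k)}\le x_{b_j}$ with $b_j\in\Image(p)$ is implied by the chain (using $p(k)<_{\Lin}b_j$ again); and for each $b_j\notin\Image(p)$ --- which lies in some $F_k$, since $M$ has no coloops --- the constraints $x_{p(k)}\le x_{b_j}$ over all $k$ with $b_j\in F_k$ are, modulo the chain, all implied by the single one with $\Lin$-largest $p(k)$, say $x_{\pi(b_j)}\le x_{b_j}$ with $\pi(b_j)\in\Image(p)$. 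Thus $\Gamma'(p,\Lin)$ is cut out by the $r-1$ chain inequalities together with the $m-r$ inequalities $x_{\pi(b_j)}\le x_{b_j}$. The $m-1$ linear functionals $x_{s_{i+1}}-x_{s_i}$ and $x_{b_j}-x_{\pi(b_j)}$ are, up to sign, the functionals $x\mapsto x_a-x_b$ attached to the edges of the graph on vertex set $B$ whose edges form the path $s_1s_2\cdots s_r$ together with a pendant edge from each $b_j\notin\Image(p)$ to $\pi(b_j)$; this graph is a tree on the $m$ vertices of $B$, so these functionals are linearly independent. A cone in $\R^m$ defined by $m-1$ linearly independent inequalities ``$\ge 0$'' is $m$-dimensional, has one-dimensional lineality space equal to the common kernel of the functionals --- which contains, hence equals, $\R\cdot(1,\dots,1)$ --- and is simplicial after modding out by this lineality; its extremal rays modulo lineality are the indicator vectors of one of the two components obtained by deleting a tree edge, hence $0/1$ vectors, and they remain $0/1$ after applying $f_B$.

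The step I expect to be the main obstacle is this last reduction together with the linear-independence count: one must be sure that after discarding redundant inequalities nothing forces a further equality, so that one really lands on a simplicial cone of the exact dimension $m$ rather than on something smaller or non-simplicial. This is precisely where the three standing hypotheses are used --- no loops, so that $F_k\ne\emptyset$ and $p$ is well defined; no coloops, so that each $b_j$ lies in some $F_k$ and the graph above is connected; and the compatible-pair inequality $p(k)<_{\Lin}b_j$, which is what makes all the constraints coming from $\Image(p)$ collapse onto the chain.
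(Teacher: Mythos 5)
Your proof takes essentially the same route as the paper's: both build the same tree on the $m$ parts (yours on vertex set $B$, the paper's on the blocks of $\calQ(p)$, in obvious bijection), consisting of a path through $\Image(p)$ ordered by $\Lin$ plus a pendant edge from each element outside $\Image(p)$ to the $\Lin$-largest eligible element of $\Image(p)$, and then read off the cone, its lineality, and its $0/1$ extremal rays from that tree. The only cosmetic difference is that you work in the chart $\R^m$ via $f_B$ and push forward, while the paper works directly in $\R^n$; you also spell out more explicitly the equivalence between ``inducing $(p,\Lin)$'' and the tree inequalities, a step the paper leaves brief.
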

\begin{proof}
  Let $\calQ = \calQ(p)$ be the partition of the set $[n]$ with $m$ blocks $Q_b := \{b\} \cup p^{-1}(\{b\})$, for $b \in B$. Note that if a vector $v \in \Trop(M)_B$ induces the pair $(p,\Lin)$ then $v$ has to be constant on each of the blocks of $\calQ$, that is, for any $i,j$ in the same block of $\calQ$ we must have $v_i = v_j$. We will construct a directed caterpillar tree $T = T(p,\Lin)$ with set of vertices $\calQ$ encoding all further restrictions on the coordinates of such a vector $v$: If there is a directed path in $T$ from $Q_b$ to $Q_{b'}$ then $v$ must satisfy $v_i \leq v_j$ for $i \in Q_b$ and $j \in Q_{b'}$.

  Since $\Lin$ is a total order on the elements in the image of $p$, it naturally induces a total order on the non-singleton blocks of $\calQ$. We start the construction of $T$ as a directed path whose vertices are all the non-singleton blocks of $\calQ$, with their position in the path matching the order prescribed by $\Lin$ (i.e., it is possible to walk from $Q_b$ to $Q_{b'}$ if $b <_{\Lin} b')$. Now, for every $c \in B - \Image(p)$, add a directed edge from the non-singleton block $Q_b$ to the block $Q_c = \{c\}$, where $b$ is the $\Lin$-largest element in the image of $p$ for which there is a $k \in [n] - B$ such that $k \in Q_b$ (i.e. $p(k)=b$) and $c \in F_k$. Note that such a $b$ is guaranteed to exist since the matroid $M$ has no coloops. Corollary \ref{localbergman} ensures that the directed tree $T$ constructed in this way encodes precisely all the conditions on the coordinates of a vector $v$ for it to be a vector in the local tropical linear space $\Trop(M)_B$ inducing the pair $(p,\Lin)$. More specifically, $v \in \Trop(M)_B$ and $v$ induces $(p,\Lin)$ if and only if $v$ is constant on the blocks of $\calQ$ and for any directed edge $Q_b \to Q_{b'}$ in $T$ we have $v_b \leq v_{b'}$.

  Now, it is easy to see that the set $\Gamma(p,\Lin)$ of vectors $v$ satisfying the conditions imposed by $T$ is a polyhedral cone with lineality space generated by the vector $(1,1,\dotsc,1) \in \R^n$. Moreover, after modding out by its lineality space, the cone $\Gamma(p,\Lin)$ can be described as the positive span of $m-1$ linearly independent $0/1$ vectors, as follows. Think of $T$ as a partial order on the blocks of $\calQ$, and for any $b \in B$ define $w_b$ as the sum of all coordinate vectors $e_i$ such that $i$ is in the union of all blocks in $\calQ$ greater than or equal to $Q_b$ (according to $T$). The cone $\Gamma(p,\Lin)$ is then equal to the positive span of the vectors $\{w_b \mid b \in B \text{ and } w_b \neq (1,1,\dotsc,1)\}$.
\end{proof}

\begin{example}
  Let $M$ and $B$ be defined as in Example \ref{preference}. We showed that the pair $(p,\Lin)$ is a compatible pair, where $p$ is given by $p(5)=4$, $p(6)=1$, $p(7)=1$, and $\Lin$ is the total order $1 <_\Lin 4$. Following the proof of Proposition \ref{proptree}, the partition $\calQ(p)$ for this preference function is $\{\{1,6,7\},\{2\},\{3\},\{4,5\}\}$. The directed caterpillar tree $T$ associated to the pair $(p,\Lin)$ is depicted in Figure \ref{figtree}. It encodes the conditions for a vector $v \in \R^7$ for it to induce the compatible pair $(p,\Lin)$: $v$ induces $(p,\Lin)$ if and only if $v_3 \geq v_1 = v_6 = v_7 \leq v_4 = v_5 \leq v_2$. These equalities and inequalities define the simplicial polyhedral cone $\Gamma(p,\Lin)$ in $\R^7$. After modding out by the lineality space $\R \cdot (1,1,\dotsc,1)$, the cone $\Gamma(p,\Lin)$ is generated by the rays $e_2, e_{245}, e_3$.  
\begin{figure}[ht]
    \centering
    \includegraphics[height=3.4cm]{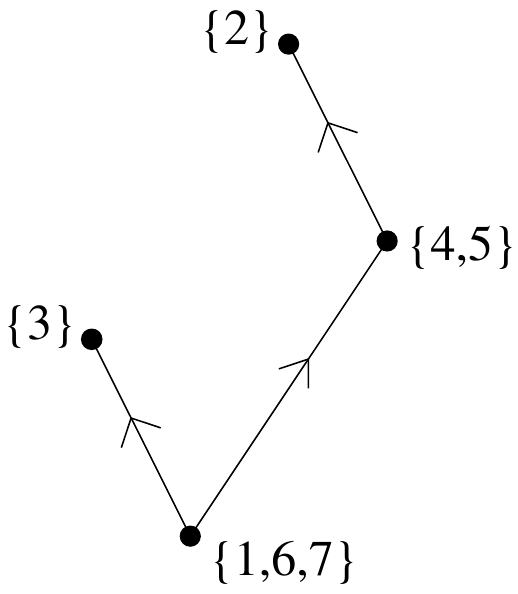}
    \caption{A directed caterpillar tree $T$}
    \label{figtree}
\end{figure}  
\end{example}

We will later prove in Theorem \ref{rays} that the extremal rays of the cones $\Gamma(p,\Lin)$ are precisely the indicator vectors of all flats of the matroid $M$ that are either cyclic flats or singletons.

It was pointed out to the author that the construction given in the proof of Proposition \ref{proptree} of the polyhedral cones $\Gamma(p,\Lin)$ from the directed trees $T(p,\Lin)$ agrees with a more general construction of Postnikov, Reiner, and Williams described in \cite{facespermutohedra}. In their paper, a ``braid'' polyhedral cone $\sigma_Q \subseteq \R^n$ is associated to every preposet $Q$ on the set $[n]$. In the case the Hasse diagram of the preposet $Q$ is a (directed) tree $T$, their construction of the cone $\sigma_Q$ agrees exactly with our construction of the cone $\Gamma(p, \Lin)$.   

It follows from our discussion that for any basis $B$ of $M$, the local tropical linear space $\Trop(M)_B$ is the union over all compatible pairs $(p,\Lin)$ with respect to $B$ of the simplicial cones $\Gamma(p,\Lin)$. However, since local tropical linear spaces corresponding to different bases might intersect nontrivially, compatible pairs with respect to different bases might give rise to the same cone. In order to find a canonical pair representing each cone, we say that a preference function $p:[n] - B \to B$ is \textbf{regressive} if $p(k) < k$ for all $k \in [n] - B$. If $(p,\Lin)$ is a compatible pair and $p$ is a regressive preference function, we say that $(p,\Lin)$ is a \textbf{regressive compatible pair}.

\begin{theorem}\label{thmfan}
  The tropical linear space $\Trop(M)$ is the union over all bases $B$ and all regressive compatible pairs $(p,\Lin)$ with respect to $B$ of the simplicial cones $\Gamma(p,\Lin)$. Moreover, if $(p_1,\Lin_1)$ and $(p_2,\Lin_2)$ are different regressive compatible pairs (possibly with respect to different bases $B_1$ and $B_2$), then the intersection of $\Gamma(p_1,\Lin_1)$ and $\Gamma(p_2,\Lin_2)$ is a proper common face of both cones.
\end{theorem}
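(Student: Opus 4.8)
I would prove the two assertions separately. The inclusion ``$\supseteq$'' in the first is immediate: each $\Gamma(p,\Lin)$ lies in some $\Trop(M)_B\subseteq\Trop(M)$. For ``$\subseteq$'', given $v\in\Trop(M)$ let $B$ be the \emph{lexicographically smallest} basis among all bases of maximal $v$-weight. Then $v\in\Sigma_B$, hence $v\in\Trop(M)_B$, so $v$ induces a compatible pair $(p,\Lin)$ with respect to $B$ via any linear extension $\J$ of the partial order that $v$ puts on $B$, and $v\in\Gamma(p,\Lin)$. It remains to see that $p$ is regressive. Suppose $p(k)>k$ for some $k\notin B$. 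Since $B$ has maximal $v$-weight, \eqref{eqfundcirc} forces $\min\{v_i\mid i\in C(k,B)\}=v_k$, and because $v\in\Trop(M)$ this minimum is attained a second time, necessarily at $p(k)$ (the $\J$-smallest, hence a $v$-smallest, element of $F_k$); so $v_{p(k)}=v_k$. But then $B-p(k)+k$ is a basis of $M$ of the same $v$-weight as $B$ and lexicographically smaller (as $k<p(k)$ and $k\notin B$), contradicting the choice of $B$.

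\textbf{Reduction of the fan property.} For the second assertion I would show that the cones $\Gamma(p,\Lin)$ over regressive compatible pairs are the maximal cones of a fan. By the elementary ``convexity'' fact that a convex subset of a cone whose relative interior meets the relative interior of a face lies inside that face, it suffices to prove: for any two regressive compatible pairs and any point $v$ in the relative interior of $\Gamma(p_1,\Lin_1)\cap\Gamma(p_2,\Lin_2)$, the smallest face $F_i$ of $\Gamma(p_i,\Lin_i)$ containing $v$ is the same for $i=1,2$. Granting this, $\Gamma(p_1,\Lin_1)\cap\Gamma(p_2,\Lin_2)$ is a convex subset of each $\Gamma(p_i,\Lin_i)$ with $v$ in its relative interior, hence contained in $F_i$, and since $F_1=F_2\subseteq\Gamma(p_1,\Lin_1)\cap\Gamma(p_2,\Lin_2)$ it equals this common face; the face is proper, since if $F_1$ were all of $\Gamma(p_1,\Lin_1)$ then $\Gamma(p_1,\Lin_1)\subseteq\Gamma(p_2,\Lin_2)$, forcing equality because both cones are $m$-dimensional, and this in turn shows relative interiors of distinct cones are disjoint (the last fan axiom).

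\textbf{Identifying the faces.} To prove $F_1=F_2$ I would use the braid-cone description noted after Proposition~\ref{proptree}: $\Gamma(p,\Lin)=\mathcal{C}(R_{p,\Lin})$, the cone of the preposet on $[n]$ whose equivalence classes are the blocks of $\calQ(p)$ and whose Hasse quotient is the tree $T(p,\Lin)$. Choosing $v$ generic in the relative interior of $\Gamma(p_1,\Lin_1)\cap\Gamma(p_2,\Lin_2)=\mathcal{C}(\langle R_1\cup R_2\rangle)$, its level sets are exactly the classes of $\langle R_1\cup R_2\rangle$; these classes are automatically order-convex for each $R_i$, and one checks that $F_i$ is the braid cone of the partial order that $R_i$ — equivalently, the tree $T(p_i,\Lin_i)$ — induces on the level sets of $v$. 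So $F_1=F_2$ comes down to the combinatorial claim: \emph{the two caterpillar trees $T(p_1,\Lin_1)$ and $T(p_2,\Lin_2)$ induce the same partial order on the level sets of $v$}. This is where matroid theory enters: using the fundamental circuits over $B_1$ and over $B_2$ together with the defining property of $\Trop(M)$ (every circuit minimum is attained twice), one argues that whenever the first tree forces $v_a\le v_b$ across two level sets, the second one does as well — the circuits of a single matroid tie the two bookkeeping structures together.

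\textbf{Main obstacle.} I expect the hard point to be precisely this last claim: that the two a priori unrelated caterpillar trees agree after being contracted along their common gluing equivalence. Everything else is either the standard braid-cone/face dictionary, which may be quoted from \cite{facespermutohedra}, or the lexicographic-minimality manipulations of the first part and the formal fan assembly of the second. It is worth emphasizing that regressiveness is not cosmetic: it is exactly the normalization that makes each cone $\Gamma(p,\Lin)$ correspond to a single canonical pair rather than to many, which is what keeps the relative interiors apart and lets the faces match; dropping it breaks the argument.
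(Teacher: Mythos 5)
Your cover argument matches the paper's essentially line for line: choose the lexicographically smallest basis $B$ of maximal $v$-weight, conclude $v\in\Trop(M)_B$, and if $p(k)=l>k$ then $B-\{l\}\cup\{k\}$ is a basis (by Equation~\eqref{eqfundcirc}) of the same $v$-weight that is lexicographically smaller, a contradiction. This part is correct and is exactly the paper's proof.

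For the fan property, your reduction to ``for every $v$ in the relative interior of $\Gamma(p_1,\Lin_1)\cap\Gamma(p_2,\Lin_2)$ the minimal faces $F_1$ and $F_2$ containing $v$ coincide'' is a standard and correct deduction, and your remark on why regressivity matters (it makes the tree, hence the cone, a canonical invariant of the pair) matches the paper's observation that distinct regressive pairs yield distinct trees. However, you yourself mark the crux --- that the two caterpillar trees induce the same order on the level sets of $v$, so that $F_1=F_2$ --- as the ``main obstacle'' and do not prove it, so the proposal contains a genuine gap. The gap is real: the analogous statement fails for arbitrary preposet cones. For instance in $\R^3$ take the poset with relations $1<2$, $1<3$ (a caterpillar) and the chain $1<2<3$; both give simplicial braid cones modulo the line $\R\cdot\mathbf{1}$, but the chain cone is a full-dimensional subcone of the other, not a face. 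So the matroid structure (circuits, flats) must enter and your argument does not yet explain how. To be fair, the paper's own proof of this step is just as terse --- it asserts that ``it follows from the description in terms of directed trees\dots that the cones\dots intersect in a common face'' without further detail --- so you have located the difficulty precisely where the published argument is least explicit. If you want to close the gap along your route, the cleanest line is probably to show that for $v\in\Gamma(p,\Lin)$ the minimal face containing $v$ is determined by $v$ alone: write $v$ in its unique expansion $v=\sum_b\lambda_b\,e_{U_b}+c\,\mathbf{1}$ over the laminar family of upper sets $U_b$ of the tree (these are the cone's generators), observe that which $\lambda_b$ vanish is dictated by the level-set structure of $v$ and by Theorem~\ref{rays} these $U_b$ are flats of $M$, and then argue that the surviving flats, hence the face, do not depend on the compatible pair used.
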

\begin{proof}
  In order to show that $\Trop(M)$ is the union of all cones $\Gamma(p,\Lin)$ with $p$ regressive, let $v$ be any vector in $\Trop(M)$. Let $B$ be the first basis with respect to lexicographic order which has maximal $v$-weight (i.e., such that $\sum_{i \in B} v_i$ is maximal). The vector $v$ is then in the local tropical linear space $\Trop(M)_B$, so it induces some compatible pair $(p,\Lin)$ with respect to $B$. Assume by contradiction that $p$ is not a regressive preference function, so there exists a $k \in [n] - B$ such that $l := p(k) > k$. Since $l \in C(k, B)$, Equation \eqref{eqfundcirc} implies that $B' := B - \{l\} \cup \{k\}$ is also a basis of $M$. However, we have $v_k = v_l$, and thus $B'$ is also a basis of maximal $v$-weight, contradicting our choice of $B$.

  Now, suppose that $(p_1,\Lin_1)$ and $(p_2,\Lin_2)$ are any two compatible pairs. It follows from the description in terms of directed trees given in the proof of Proposition \ref{proptree} that the cones $\Gamma(p_1,\Lin_1)$ and $\Gamma(p_2,\Lin_2)$ intersect in a common face. Moreover, if $(p_1,\Lin_1)$ and $(p_2,\Lin_2)$ are distinct regressive compatible pairs then the corresponding directed trees $T(p_1,\Lin_1)$ and $T(p_2,\Lin_2)$ are different. It follows that this intersection has to be a proper face of both cones.
\end{proof}

\begin{definition}
  The \textbf{cyclic Bergman fan} $\Phi(M)$ of $M$ is the simplicial fan in $\R^n$ whose maximal cones are the cones $\Gamma(p,\Lin)$ with $(p,\Lin)$ a (regressive) compatible pair. The support of $\Phi(M)$ is the tropical linear space $\Trop(M)$ of the matroid $M$. 
\end{definition}

As we will see later, the cyclic Bergman fan structure $\Phi(M)$ on $\Trop(M)$ is a little finer than the (coarsest) nested set structure on $\Trop(M)$ that was described in \cite{feichtnersturmfels}. However, working with $\Phi(M)$ seems to be better for computational purposes, since its maximal cones are in one-to-one correspondence with effectively computable regressive compatible pairs.  An explicit example of how the different fan structures on $\Trop(M)$ might look like is given in Example \ref{diffans}.

We now study the rays of the cyclic Bergman fan $\Phi(M)$.

\begin{definition}
 A flat $F \subseteq [n]$ of $M$ is called a \textbf{cyclic flat} if it is equal to a union of circuits of $M$. Equivalently, $F$ is a cyclic flat if and only if $F$ is a flat of $M$ and $[n] - F$ is a flat of the dual matroid $M^*$.
\end{definition}

\begin{lemma}\label{lemmacyclic}
  Let $F$ be a cyclic flat, and suppose $I \subseteq F$ is an independent set spanning $F$. Then $F$ is a union of fundamental circuits over $I$.
\end{lemma}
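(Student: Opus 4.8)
The plan is to show that every element of $F$ lies in some fundamental circuit $C(e,I)$ with $e \in F - I$. For an element $e \in F - I$ this is immediate: since $I$ spans the flat $F$, the set $I \cup \{e\}$ is dependent, so $C(e,I)$ exists, it contains $e$, and $C(e,I) \subseteq I \cup \{e\} \subseteq F$. Hence the only real content is to prove that every $i \in I$ also belongs to $C(e,I)$ for some $e \in F - I$. By the description \eqref{eqfundcirc} of fundamental circuits, $i \in C(e,I)$ is equivalent to $I - \{i\} \cup \{e\}$ being independent, i.e.\ to $e \notin \mathrm{cl}(I - \{i\})$ (closure taken in $M$). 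So I need to produce, for each $i \in I$, an element $e \in F - I$ lying outside $\mathrm{cl}(I - \{i\})$.

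Suppose for contradiction that some $i \in I$ admits no such $e$, i.e.\ $e \in \mathrm{cl}(I - \{i\})$ for every $e \in F - I$. Since also $I - \{i\} \subseteq \mathrm{cl}(I - \{i\})$, the flat $\mathrm{cl}(I - \{i\})$ then contains $(I - \{i\}) \cup (F - I) = F - \{i\}$. On the other hand $\mathrm{cl}(I - \{i\}) \subseteq \mathrm{cl}(I) = F$ (because $I \subseteq F$ and $F$ is a flat spanned by $I$), and $i \notin \mathrm{cl}(I - \{i\})$ because $I$ is independent. Combining these, $\mathrm{cl}(I - \{i\}) = F - \{i\}$; in particular $F - \{i\}$ is a flat of $M$ that does not contain $i$.

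Now I would invoke the hypothesis that $F$ is a \emph{cyclic} flat: being a union of circuits, $F$ contains a circuit $C$ with $i \in C$. As $C$ is minimally dependent, $C - \{i\}$ is independent but $i \in \mathrm{cl}(C - \{i\})$. Since $C - \{i\} \subseteq F - \{i\}$ and $F - \{i\}$ is a flat, this forces $i \in \mathrm{cl}(C - \{i\}) \subseteq F - \{i\}$, a contradiction. Therefore each $i \in I$ lies in some fundamental circuit over $I$ contained in $F$, and together with the elements of $F - I$ this gives $F = \bigcup_{e \in F - I} C(e,I)$.

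I do not expect any genuinely hard step here; the one point worth flagging is that everything up to ``$F - \{i\}$ is a flat'' uses only that $F$ is a flat spanned by $I$, so the cyclic-flat hypothesis has to be (and is) exactly what produces the final contradiction via the circuit through $i$. The remainder is routine bookkeeping with closures and the characterization \eqref{eqfundcirc}.
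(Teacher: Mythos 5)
Your proof is correct, and it takes a genuinely different route from the paper's. The paper argues by contradiction on the union $U$ of fundamental circuits over $I$: picking $i \in F - U$ and a circuit $C \subseteq F$ through $i$ with $|C - I|$ minimal, it applies strong circuit elimination with the fundamental circuit of some $a \in C - I$ to produce a circuit through $i$ meeting $[n]-I$ in fewer elements, a contradiction. You instead read the membership condition $i \in C(e,I)$ off the characterization \eqref{eqfundcirc} as $e \notin \mathrm{cl}(I - \{i\})$, and show that if no such $e \in F - I$ exists then $\mathrm{cl}(I - \{i\}) = F - \{i\}$ is a flat; but $F$ being cyclic puts $i$ on a circuit $C \subseteq F$, forcing $i \in \mathrm{cl}(C - \{i\}) \subseteq F - \{i\}$, a contradiction. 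Your version avoids the strong circuit elimination axiom entirely, working only with closures, and it isolates very cleanly where cyclicity enters: the cyclic-flat hypothesis is used precisely to rule out $F - \{i\}$ being a flat for $i \in F$. (Indeed your argument implicitly proves the pleasant equivalence that a flat $F$ is cyclic iff $F - \{i\}$ is not a flat for any $i \in F$.) The paper's proof is more in the spirit of the standard circuit-axiom toolkit; yours is a bit more elementary and arguably more transparent about the role of the hypothesis. Both are short and correct.
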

\begin{proof}
  Denote by $U$ the union of all fundamental circuits over $I$ (which are contained in $F$), and assume by contradiction that $U \subsetneq F$. Since $F - I \subseteq U$, there exists some $i \in I$ such that $i \notin U$. Let $C \subseteq F$ be some circuit containing $i$ such that $|C - I|$ is as small as possible. Let $a$ be some element in $C - I$, and denote by $D$ the fundamental circuit of $a$ over $I$. Applying the strong circuit elimination axiom (see \cite[Proposition 1.4.11]{oxley}) to the circuits $C$ and $D$, with the elements $a \in C \cap D$ and $i \in C - D$, we get that there is a circuit $C' \subseteq F$ containing $i$ and contained in $C \cup D - \{a\}$, contradicting our choice of $C$.
\end{proof}

\begin{theorem}\label{rays}
  The rays of the cyclic Bergman fan $\Phi(M)$ (after modding out by the lineality space generated by the vector $(1,1,\dotsc,1) \in \R^n$) are precisely the rays generated by the vectors $e_F := \sum_{i\in F} e_i$, where $F \subsetneq [n]$ is a flat of $M$ which is either a cyclic flat or a singleton.
\end{theorem}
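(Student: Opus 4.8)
The plan is to establish the claim in two directions. First I would show that every ray of every maximal cone $\Gamma(p,\Lin)$ is of the form $e_F$ for $F$ a cyclic flat or singleton; second, that every such $e_F$ actually appears as a ray of some $\Gamma(p,\Lin)$.

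For the first direction, recall from the proof of Proposition \ref{proptree} that the rays of $\Gamma(p,\Lin)$ (modulo the lineality space) are the vectors $w_b = e_{U(b)}$, where $U(b)$ is the union of all blocks $Q_{b'}$ of $\calQ(p)$ with $Q_{b'} \geq Q_b$ in the caterpillar tree order $T(p,\Lin)$, for those $b$ with $U(b) \neq [n]$. So I must show each such $U(b)$ is either a singleton or a cyclic flat. Fix $b$ and write $F := U(b)$. Using the explicit description of $T$: $F$ is an up-set, so it contains $Q_b$ and all later non-singleton blocks, plus the pendant singleton blocks $Q_c = \{c\}$ attached at those later non-singleton blocks. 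The key point is to check, via the defining inequalities of $T$ and Corollary \ref{localbergman}, that $F$ is a \emph{flat}: if $x \notin F$ and $F \cup \{x\}$ were dependent, its fundamental circuit $C(x, B\cap\text{(something)})$ would force a coordinate relation violating membership of $v$ in $\Gamma(p,\Lin)$ (or would force $x$ into a block inside $F$). Then I would show $F$ is \emph{cyclic} whenever it is not a singleton: the non-singleton blocks $Q_{b'} \subseteq F$ each contain some $k \in [n]-B$ with $p(k)=b'$, and the fundamental circuit $C(k,B)$ is contained in $F$ (since $F_k \subseteq Q_{b'} \cup (\text{earlier-attached singletons}) \subseteq F$ by the attachment rule in the construction of $T$); these circuits cover all of $F$ except possibly some isolated points of $B$, and one argues as in Lemma \ref{lemmacyclic} that in fact they cover everything, so $F$ is a union of circuits, hence cyclic.

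For the converse, given a cyclic flat $F$ (the singleton case is easy and can be disposed of first, e.g. using a basis in which the singleton is the smallest element of a fundamental circuit), I would construct a compatible pair $(p,\Lin)$ whose tree $T(p,\Lin)$ has $F$ as one of its up-sets $U(b)$. Pick an independent set $I_F$ spanning $F$ and extend it to a basis $B$ of $M$. By Lemma \ref{lemmacyclic}, $F$ is the union of fundamental circuits over $I_F$, hence over $B$. Now choose the total order $\J$ on $B$ so that all elements of $B - F$ come before all elements of $B \cap F = I_F$; the induced preference function $p$ then sends every $k \in [n]-B$ with $F_k \subseteq F$ into $I_F$, and the blocks $Q_{b'}$ for $b' \in I_F \cap \Image(p)$ together with their pendant singletons reconstruct exactly $F$ as the final up-set of the caterpillar. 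A short check (using that $F$ is a flat, so no circuit straddles $F$ and its complement in a way that would reorder things) shows $(p,\Lin)$ is genuinely a compatible pair and that $F = U(b)$ for the $\Lin$-smallest relevant $b$. Finally, since $\Phi(M)$ is a fan (Theorem \ref{thmfan}), the rays of $\Phi(M)$ are exactly the union of the rays of its maximal cones, so these two directions together give the result.

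The main obstacle I expect is the converse: one must choose the basis $B$, the spanning independent set $I_F$, \emph{and} the total order $\J$ simultaneously so that (i) $p$ is a legitimate preference function realizing the desired block structure, (ii) $(p,\Lin)$ is compatible (i.e.\ arises from an actual $v \in \Trop(M)_B$ — equivalently the tree inequalities are consistent, which they always are, but one must verify $\Lin$ restricted to $\Image(p)$ is the one induced by $\J$), and (iii) the up-set $U(b)$ in $T(p,\Lin)$ equals $F$ on the nose rather than something larger. Getting the pendant-singleton attachment rule to cooperate — i.e.\ ensuring every $c \in F \cap (B - \Image(p))$ attaches \emph{inside} $F$ and every $c \notin F$ attaches outside — is exactly where the hypothesis that $F$ is a flat (not merely closed under the relevant circuits) gets used, and is the delicate point of the argument.
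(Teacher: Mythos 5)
Your overall structure---analyze the up-sets $U(b)$ of the caterpillar tree for the forward direction, and construct a suitable compatible pair for the converse---is the same as the paper's. But there are two places where your sketch either has a real gap or does extra work where a clean observation suffices.

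\textbf{Flatness of $U(b)$.} This is where your argument is genuinely vague. You propose to show $F = U(b)$ is a flat by a case analysis on what would happen ``if $x \notin F$ and $F \cup \{x\}$ were dependent,'' invoking ``its fundamental circuit $C(x, B\cap\text{(something)})$'' and claiming it forces a contradiction. It is not clear what independent set you are taking the fundamental circuit over, nor how the contradiction is derived, and the case $x \in B$ versus $x \notin B$ behaves quite differently. The paper avoids all of this: once you know $F$ is a union of circuits (or a singleton), you already have $e_F \in \Gamma(p,\Lin) \subseteq \Trop(M)$, and the standard fact that a $0/1$ vector $e_F$ lies in $\Trop(M)$ if and only if $F$ is a flat of $M$ finishes it immediately. (Concretely: $e_F \in \Trop(M)$ iff no circuit has exactly one element outside $F$, which is exactly the flat condition.) Your route can probably be made to work, but it is doing by hand something that drops out for free. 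Relatedly, you say the fundamental circuits ``cover all of $F$ except possibly some isolated points of $B$, and one argues as in Lemma~\ref{lemmacyclic} that in fact they cover everything'' --- in fact the attachment rule for singleton leaves already guarantees complete coverage with no residual points, and Lemma~\ref{lemmacyclic} is not needed in this direction at all; the paper only uses that lemma in the converse.

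\textbf{The singleton converse.} Your parenthetical suggestion to use ``a basis in which the singleton is the smallest element of a fundamental circuit'' goes the wrong way: if $b$ is the $\J$-smallest element of some $F_k$, then $b \in \Image(p)$, so $Q_b$ is a \emph{non-singleton} block and $U(b)$ is not $\{b\}$. To make $\{b\}$ a pendant leaf you need $b \notin \Image(p)$, which the paper arranges by taking $v$ generic in $\Trop(M)_B$ with $v_b$ \emph{maximal} among $v_a$, $a \in B$ (so $b$ is $\J$-largest), together with the observation that $\{b\}$ being a flat forbids parallel elements and hence forbids $F_k = \{b\}$.

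For the cyclic-flat converse your plan is essentially the paper's: take $B$ meeting $F$ in a spanning independent set, order $B$ so that $B - F$ precedes $B \cap F$, and apply Lemma~\ref{lemmacyclic} to see the last up-set of the caterpillar is exactly $F$. The paper makes this concrete by taking $v := e_F$ directly (noting $v \in \Trop(M)_B$ because $B$ has maximal $v$-weight) and picking any total order $\J$ compatible with $v$; this dispenses with the ``one must verify $\Lin$ is the one induced by $\J$'' worry you flag, since the pair $(p,\Lin)$ is induced by an explicit $v$ by construction.
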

\begin{proof}
  Recall the description of the maximal cones of $\Phi(M)$ and their extremal rays in terms of directed trees given in the proof of Proposition \ref{proptree}. If $(p,\Lin)$ is a compatible pair  with respect to the basis $B$ then from this description we see that all the extremal rays of the cone $\Gamma(p,\Lin)$ have the form $\R_{\geq 0} \cdot e_F$, where $F = \{b\}$ for some $b \in B$ or $F \subsetneq [n]$ is a union of fundamental circuits over $B$. Moreover, since $e_F$ is in the tropical linear space $\Trop(M)$, $F$ must be a flat of $M$.
  
  Now, suppose $F = \{b\}$ is a flat of $M$. If $B$ is a basis of $M$ containing $b$ and $v$ is a generic vector in the local tropical linear space $\Trop(M)_B$ such that $v_b = \max_{a \in B} v_a$, then the singleton $\{b\}$ appears as one of the leaves in the directed tree corresponding to the compatible pair induced by $v$, so $e_b$ is an extremal ray of the corresponding maximal cone. 
  
  In the case $F$ is a cyclic flat, let $B$ be a basis of $M$ intersecting $F$ in as many elements as possible. The vector $v := e_F$ is then in the local tropical linear space $\Trop(M)_B$. Let $\J$ be any total order on $B$ satisfying $v_a < v_b \implies a <_{\J} b$, and let $(p,\Lin)$ be the compatible pair induced by $\J$. By Lemma \ref{lemmacyclic}, the directed tree associated to the pair $(p,\Lin)$ has a node $Q$ such that the set of elements that appear in nodes greater than or equal to $Q$ is precisely $F$. It follows that $e_F$ is an extremal ray of the cone $\Gamma(p,\Lin)$.
\end{proof}

We now discuss how the different fan structures on $\Trop(M)$ that have been studied in the literature compare to the cyclic Bergman fan $\Phi(M)$. Let us assume that the matroid $M$ is a connected matroid. The coarsest subdivision of $\Trop(M)$ is called the \textbf{Bergman fan} $\B(M)$ of $M$, and it was studied in \cite{feichtnersturmfels}. It is the fan structure on $\Trop(M)$ inherited from the normal fan to the matroid polytope of $M$. The rays in this fan (after modding out by the lineality space) are all the vectors of the form $e_F$ with $F$ a ``flacet'' of $M$ ($F \subseteq [n]$ is a ``flacet'' of $M$ if the matroid $M|F$ obtained by restricting to $F$ and the matroid $M/F$ obtained by contracting $F$ are both connected matroids). The Bergman fan is refined by the \textbf{nested set fan} (also studied in \cite{feichtnersturmfels}), whose rays are the vectors $e_F$ with $F$ a connected flat (i.e., a flat $F$ such that $M|F$ is connected), and whose maximal cones correspond to maximal nested sets of connected flats of $M$. This fan is in turn refined by the cyclic Bergman fan $\Phi(M)$, whose rays are the vectors $e_F$ with $F$ a flat which is either cyclic or a singleton, and whose maximal cones correspond to regressive compatible pairs. Finally, the cyclic Bergman fan $\Phi(M)$ is subdivided by the \textbf{fine subdivision} of $\Trop(M)$, which was studied in \cite{ardila}. In this fine subdivision the rays are the vectors $e_F$ with $F$ any flat, and the maximal cones correspond to maximal chains of flats. Since the last three of these fans are simplicial fans, one way of measuring how different these fan structures on $\Trop(M)$ are is to measure how different the following sets of flats of $M$ are:
\[\{F \text{ connected flat} \} \subseteq \{F \text{ flat, either cyclic or singleton} \} \subseteq \{F \text{ flat} \}.\]
A criterion for when the Bergman fan is equal to the nested set fan can be found in Theorem 5.3 of \cite{feichtnersturmfels}.

\begin{example}\label{diffans}
  Let $M$ be the graphical matroid defined in Example \ref{graphical}. The Bergman fan $\B(M)$ is the coarsest fan structure on the tropical linear space $\Trop(M)$, and it consists of the six maximal cones discussed in Example \ref{graphical}. This coarsest fan structure is also equal to the nested set fan of $M$. After modding out by the lineality space generated by the vector $(1,1,\dotsc,1) \in \R^6$, the fan $\B(M)$ has four rays $e_{12}, e_{34}, e_{5}, e_{6} \in \R^6$, corresponding to the four nontrivial connected flats of the matroid $M$ (which are also ``flacets'' of $M$). There is one more cyclic flat of $M$ which is not connected: the flat $\{1,2,3,4\}$. This implies that the cyclic Bergman fan $\Phi(M)$ strictly refines the fan $\B(M)$. In fact, the maximal cone of $\B(M)$ described by $v_1 = v_2 \geq v_5 = v_6 \leq v_3 = v_4$ gets subdivided into two smaller cones by the new ray $e_{1234}$ of the fan $\Phi(M)$. The fine subdivision of $\Trop(M)$ is a fan with ten rays, corresponding to the ten nontrivial flats of $M$. In the fine subdivision, each of the six maximal cones of $\B(M)$ gets subdivided by a new ray into two cones, to get a total of twelve maximal cones.
\end{example}

\section{Computing Compatible Pairs}\label{sectionalgorithm}

Let $M$ be a rank $m$ matroid on the ground set $[n]$ having no loops and no coloops. The cyclic Bergman fan $\Phi(M)$ described in Section \ref{sectionfan} allows us to develop an algorithm for computing the tropical linear space $\Trop(M)$ of $M$ in an effective way. As it was discussed above, the maximal cones of $\Phi(M)$ are in bijection with regressive compatible pairs, so the key idea for a fast calculation of $\Phi(M)$ lies in coming up with a good way of computing all possible regressive compatible pairs with respect to a given basis $B$. The way compatible pairs were defined made use of a total order $\mathcal{J}$ on the elements of $B$ to construct the pair, but it would not be a very good idea to go over all possible such total orders if $m$ is not very small. Instead, what we do is to construct recursively each compatible pair $(p,\Lin)$ by building up $p$ and $\Lin$ \emph{at the same time}. 
\begin{algorithm}\label{algobergman}
  \DontPrintSemicolon
  \LinesNumbered
  \SetKwFunction{pref}{Pref}
  \SetKwBlock{proc}{Procedure \pref{$k, p, \Lin$}:}{end}
  \SetKwFor{Each}{for each}{do}{end}

  \caption{Computing the cyclic Bergman fan $\Phi(M)$}
  \KwIn{A rank $m$ matroid $M$ having no loops and no coloops.}
  \KwOut{A list of all maximal cones in the cyclic Bergman fan $\Phi(M)$.}
  \BlankLine
  \Each{$B \subseteq [n]$ basis of $M$}{ 
    $\bullet$ Compute fundamental circuits:\;
    \Each{$k \in [n] - B$}{
      Compute $F_k := C(k,B) - \{k\}$.\;
    }
    $\bullet$ Compute recursively all regressive compatible pairs $(p,\Lin)$ with respect to $B$:\;
      Initialize $p = \emptyset$ and $\Lin = \emptyset$, and let $k$ be the first element in $[n] - B$.\;
      Apply the recursive procedure \pref{$k, p, \Lin$} described below.\;
      \proc{
        \uIf{$k = end$}{
          Output the constructed pair $(p,\Lin)$.\; 
        }
        \Else{  
          \If{$\Image(p) \cap F_k \neq \emptyset$}{ \nllabel{lineold}
            Define $p(k):=$ ``$\Lin$-smallest element in $\Image(p) \cap F_k$''.\;
            Let $k'$ by the first element in $[n] - B$ greater than $k$ (or $k'= end$ if $k$ is the last element in $[n] - B$).\;
            Apply \pref{$k',p,\Lin$}.\;
          }
          \Each{$b \in F_k - \Image(p)$ such that $b<k$}{ \nllabel{linenew} 
            \Each{total order $\Lin'$ on the set $\Image(p) \cup \{b\}$ that extends the total order $\Lin$}{ \nllabel{linenew2}
              \If{there is no $l < k$ in $[n] - B$ satisfying both $b \in F_l$ and $b <_{\Lin'} p(l)$}{  \nllabel{linecond}
                Define $p(k)=b$.\;
                Let $k'$ by the first element in $[n] - B$ greater than $k$ (or $k'= end$ if $k$ is the last element in $[n] - B$).\;
                Apply \pref{$k',p,\Lin'$}.\; 
              } 
            }
          }
        }
      }
    $\bullet$ Output the corresponding cones:\;
    \Each{pair $(p,\Lin)$ output in the previous step}{
      Compute the corresponding directed tree $T(p,\Lin)$, as described in the proof of Proposition \ref{proptree}.\;
      Output the cone $\Gamma(p,\Lin)$.\;
    }
  }
\end{algorithm}
Algorithm \ref{algobergman} describes a general procedure that achieves this goal. As we mentioned before, our algorithm has two important features that make it very fast compared to other existing algorithms: it does not have to go over all $m!$ total orders on the elements of each basis $B$, and moreover, each cone in the fan is computed exactly once, so there is no need to store them in memory or compare them with previously computed cones. 

The pseudocode for Algorithm \ref{algobergman} deserves some explanation. Its main part consists of the recursive procedure Pref$(k,p,\Lin)$, which computes for $k \in [n] - B$ all possible ways of defining $p(k)$ given that we have already computed $p(j)$ for all $j < k$ (with $j \in [n] - B$), and that we already have a total order $\Lin$ on $\Image(p) := \{p(j)\}_{j<k}$. The block starting on Line \ref{lineold} deals with the case where $p(k)$ is defined to be an element already in $\Image(p)$, in which case $p(k)$ can only be defined as the $\Lin$-smallest element in $\Image(p) \cap F_k$. The block starting on Line \ref{linenew} deals with the case where $p(k)$ is defined to be a new element not in $\Image(p)$. In this case, the condition on Line \ref{linecond} makes sure that the definition of $p(k)$ will not affect the compatibility of the pair $(p, \Lin)$.

\section{TropLi: A C++ Implementation}\label{sectionimplementation}

We developed a C++ implementation of the pseudocode described in Algorithm \ref{algobergman} for the case when the matroid $M$ is given as the matroid associated to an $m \times n$ integer matrix $A$ of rank $m$ (having no loops and no coloops). In this case, if $B$ is a basis of $M$ and $k \in [n] -B$, we compute the set $F_k := C(k,B) - \{k\}$ by first row-reducing the matrix $A$ in such a way that the submatrix of $A$ consisting of the columns indexed by $B$ is the identity, and then looking at the nonzero entries in the column indexed by $k$. A few minor changes were made to the pseudocode in Algorithm \ref{algobergman} in order to improve the efficiency of our implementation. For example, the order of the loops described by Line \ref{linenew} and Line \ref{linenew2} was reversed, so that the amount of times the condition in Line \ref{linecond} has to be checked is reduced significantly. 

In order to run through all bases $B$ of the matroid $M$, our code simply lists each subset of $[n]$ of size $m$ and tests directly if the corresponding columns are a basis of $\C^m$. It makes use of the C++ library LEDA \cite{leda} for carrying out all row operations on the matrix $A$ with exact integer arithmetic. A much more effective way of listing all bases of the matroid $M$ would be to make use of Avis and Fukuda's reverse search algorithm \cite{reversesearch}, which will be implemented in future versions of our code. 

The result is a fast software tool for computing the cyclic Bergman fan $\Phi(A):=\Phi(M(A))$ of an integer matrix $A$, called \texttt{TropLi}. This software, together with documentation on how to use it, are available online at the website 
\begin{center}
\url{http://math.berkeley.edu/~felipe/tropli/}. 
\end{center}
\texttt{TropLi} can also be used to compute some basic information about the matroid $M(A)$, like a list of all its bases, all its circuits, or its Tutte polynomial. 

We now present a few computations done using \texttt{TropLi} and report on its performance. All of the computations were performed on a laptop computer with a 2.0 GHz Intel Core 2 processor and 2 GB RAM.
\begin{example}
  Let $A$ be the $4 \times 8$ matrix whose columns correspond to the affine coordinates of the $8$ vertices of the three-dimensional unit cube. Running \texttt{TropLi} with this matrix $A$ as input takes just a few milliseconds, and produces lists of all rays and all maximal cones in the cyclic Bergman fan $\Phi(A)$. The first list shows that there are $20$ rays in the fan $\Phi(A)$, each of them specified as a $0/1$ vector in $\R^8$. The second list tells us that $\Phi(A)$ contains $80$ maximal cones, where each maximal cone is specified by its set of extremal rays. If instead we take $A$ to be the $5 \times 16$ matrix whose columns are the affine coordinates of the vertices of the four-dimensional unit cube, \texttt{TropLi} still takes a fraction of a second and computes $\Phi(A)$ to be a fan with $176$ rays and $2720$ maximal cones. 
  
  Running \texttt{TropLi} with the flag ``\texttt{-compare}'' produces a comparison between the cyclic Bergman fan $\Phi(A)$ and the Bergman fan $\B(A)$. For the three-dimensional cube these two fans are the same, and thus equal to the nested set fan. For the four-dimensional cube the Bergman fan has $2600$ maximal cones and it is thus a strict coarsening of the cyclic Bergman fan, even though the cyclic Bergman fan and the nested set fan are still equal (see Example 5.9 in \cite{feichtnersturmfels}). Our program outputs a list showing which maximal cones of $\Phi(A)$ are part of the same maximal cone in $\B(A)$. 
\end{example}

\begin{example}\label{exmixed1}
Consider the $4 \times 13$ matrix
\[
 A =
 \left(
 \begin{smallmatrix}
 1 & 1 & 1 & 0 &  0 &  0 &  0 &  0 &  0 &  0 &  0 &  0 &  0\\
 0 & 0 & 0 & 1 &  1 &  1 &  1 &  1 &  1 &  1 &  1 &  1 &  1\\
 0 & 1 & 0 & 0 & -1 &  0 & -2 & -1 &  0 & -3 & -2 & -1 &  0\\
 0 & 0 & 1 & 0 &  0 & -1 &  0 & -1 & -2 &  0 & -1 & -2 & -3\\
 \end{smallmatrix}
 \right). 
\]
 The orthogonal complement of the rowspace of $A$ is the rowspace of the $9 \times 13$ matrix
\[
  A^\perp =
 \left(
 \begin{smallmatrix}
 0 & 0 & 0 & 0 & -1 & 1 & 1 & -1 & 0 & 0 & 0 & 0 & 0\\
 0 & 0 & 0 & -1 & 1 & 0 & 0 & 1 & 0 & 0 & -1 & 0 & 0\\
 1 & 0 & -1 & 0 & 1 & -1 & 0 & 1 & 0 & 0 & -1 & 0 & 0\\
 0 & 0 & 0 & 2 & -2 & -1 & 0 & 0 & 0 & 0 & 1 & 0 & 0\\
 0 & 0 & 0 & 0 & 1 & -1 & 0 & 0 & 0 & -1 & 1 & 0 & 0\\
 0 & 0 & 0 & -1 & 1 & 0 & 0 & 0 & 1 & 0 & 0 & -1 & 0\\
 0 & -1 & 1 & 0 & -1 & 1 & 0 & -1 & 1 & 0 & 1 & -1 & 0\\
 0 & 0 & 0 & 0 & 0 & 0 & 0 & 0 & 0 & 1 & -2 & 1 & 0\\
 0 & 0 & 0 & 4 & -6 & 0 & 0 & 0 & 0 & 0 & 3 & 0 & -1\\
 \end{smallmatrix}
 \right). 
\]
 The matroid $M(A^\perp)$ is the dual matroid $M^*$ to the matroid $M:=M(A)$. As we will discuss in Section \ref{sectiondiscriminants}, computing the tropical linear space of this dual matroid can be used as the key ingredient for computing the tropicalization of the $A$-discriminantal variety. For this matrix $A$, this variety is the hypersurface defined by the condition on the coefficients of a general affine linear form $l(x,y)$ and a general cubic polynomial $g(x,y)$ so that the curves $l(x,y)=0$ and $g(x^{-1}, y^{-1})=0$ are tangent.

 A Maple implementation of the algorithm described in \cite{feichtnersturmfels} for computing tropical linear spaces locally takes many hours to compute $\Trop(M^*)$. As mentioned above, for each basis of the matroid $M^*$ (there are $430$ of them) it has to go through all $9! = 362 \, 880$ possible orderings of the rows of $A^\perp$. It also computes each maximal cone several times, so it has to compare each cone produced with the list of previously computed cones to see if it is a new cone or not. Running \texttt{TropLi} on the matrix $A^\perp$ computes the $9$ dimensional fan $\Phi(M^*)$ (which has $29$ rays and $2466$ maximal cones) in less than a second. 
\end{example}

\begin{example}\label{exmixed2}
  Let $A$ be the $5 \times 20$ matrix
  \[ 
    A = \left(
    \begin{smallmatrix}
    1 & 1 & 1 & 1 & 1 & 1 & 1 & 1 & 1 & 1 & 0 & 0 & 0 & 0 & 0 & 0 & 0 & 0 & 0 & 0 \\
    0 & 0 & 0 & 0 & 0 & 0 & 0 & 0 & 0 & 0 & 1 & 1 & 1 & 1 & 1 & 1 & 1 & 1 & 1 & 1 \\
    0 & 2 & 0 & 0 & 1 & 0 & 0 & 1 & 1 & 0 & 0 & 2 & 0 & 0 & 1 & 0 & 0 & 1 & 1 & 0 \\
    0 & 0 & 2 & 0 & 0 & 1 & 0 & 1 & 0 & 1 & 0 & 0 & 2 & 0 & 0 & 1 & 0 & 1 & 0 & 1 \\
    0 & 0 & 0 & 2 & 0 & 0 & 1 & 0 & 1 & 1 & 0 & 0 & 0 & 2 & 0 & 0 & 1 & 0 & 1 & 1 \\
    \end{smallmatrix} \right).
  \]
  The orthogonal complement of the rowspace of $A$ can be described as the rowspace of the $15 \times 20$ matrix
  \[ 
    A^\perp = \left(
    \begin{smallmatrix}
    -1 & 0 & 0 & 0 & 1 & 0 & 1 & 0 & -1 & 0 & 0 & 0 & 0 & 0 & 0 & 0 & 0 & 0 & 0 & 0 \\
    1 & -1 & 0 & 0 & 0 & 0 & 0 & 0 & 0 & 0 & -1 & 1 & 0 & 0 & 0 & 0 & 0 & 0 & 0 & 0 \\
    0 & 0 & 0 & 0 & 0 & 0 & 2 & 0 & -2 & 0 & -1 & 1 & 0 & 0 & 0 & 0 & 0 & 0 & 0 & 0 \\
    0 & 0 & 0 & 0 & 0 & 0 & 1 & 0 & -1 & 0 & -1 & 0 & 0 & 0 & 1 & 0 & 0 & 0 & 0 & 0 \\
    0 & -1 & 0 & 0 & 0 & 1 & 0 & 0 & 0 & 0 & 0 & 1 & 0 & 0 & 0 & -1 & 0 & 0 & 0 & 0 \\
    1 & -1 & 0 & 0 & 0 & 0 & -1 & 0 & 0 & 1 & 0 & 1 & 0 & 0 & 0 & -1 & 0 & 0 & 0 & 0 \\
    -1 & 0 & 0 & 0 & 0 & 0 & 5 & 0 & -4 & 0 & 0 & 2 & 1 & 0 & 0 & -2 & -1 & 0 & 0 & 0 \\
    1 & 0 & 0 & 0 & 0 & 0 & -1 & 0 & 0 & 0 & -1 & 0 & 0 & 0 & 0 & 0 & 1 & 0 & 0 & 0 \\
    2 & -1 & 0 & 0 & 0 & 1 & -5 & 0 & 3 & 0 & 0 & 0 & 0 & 1 & 0 & 0 & 0 & -1 & 0 & 0 \\
    0 & 0 & 0 & 1 & 0 & 0 & 0 & -1 & 0 & 0 & 0 & 0 & 0 & -1 & 0 & 0 & 0 & 1 & 0 & 0 \\
    1 & 0 & 0 & 0 & -1 & 0 & 0 & 0 & 0 & 0 & 0 & 0 & 0 & 0 & 0 & -1 & 0 & 1 & 0 & 0 \\
    0 & 0 & 0 & 0 & 0 & 0 & 0 & 0 & 0 & 0 & 0 & 0 & 0 & 0 & 0 & 1 & -1 & -1 & 1 & 0 \\
    0 & -1 & 0 & 0 & 0 & 0 & 0 & 1 & 0 & 0 & 0 & 1 & 0 & 0 & -1 & 0 & 1 & 0 & 0 & -1 \\
    0 & 0 & 0 & 0 & 0 & 0 & 1 & 0 & -1 & 0 & -1 & 0 & 0 & 0 & 0 & 0 & 1 & 1 & 0 & -1 \\
    0 & 0 & 1 & 0 & 0 & 0 & 3 & 0 & -4 & 0 & 0 & 2 & 0 & 0 & 0 & -3 & 0 & 0 & 0 & 1
    \end{smallmatrix} \right).
  \]
  Again, computing the tropicalization of the rowspace of $A^\perp$ can be used for studying the tropicalization of the $A$-discriminantal variety, which now corresponds to the condition on the coefficients of two general quadratic polynomials in three variables for their corresponding surfaces to be tangent.

  Running \texttt{TropLi} with the matrix $A^\perp$ as input computes the $15$-dimensional fan $\Phi(A^\perp) \subseteq \R^{20}$, which has $172$ rays and $475 \, 722$ maximal cones. All the computation takes just $60$ seconds. A lot of this time is actually spent testing which subsets of the set $\{1,2,\dotsc,20\}$ of size $15$ are bases of the matroid $M(A^\perp)$ by row-reducing all $15 \times 15$ submatrices of $A^\perp$. However, since $M(A^\perp)$ is the dual matroid to $M(A)$, we can instead use $A$ to compute the bases of $M$ (by reducing the $5 \times 5$ submatrices of $A$) and then take their complement to get the bases of $M(A^\perp)$. Also, the fundamental circuits $C_{M^*}(k,B)$ in $M^* = M(A^\perp)$ can be computed from the fundamental circuits in $M$, since $j \in C_{M^*}(k,B)$ if and only if $k \in C_{M}(j,[n]-B)$. This method of computing $\Phi(A^\perp)$ has been implemented in \texttt{TropLi}, and can be accessed by running it on the matrix $A$ using the flag ``\texttt{-dual}''. In this way, the computation of $\Phi(A^\perp)$ is even shorter: $30$ seconds.
\end{example}

\begin{example}\label{exmixed3}
  Let $A$ be the $6 \times 30$ matrix
  \[ 
    A = \left(
    \begin{smallmatrix}
    1 & 1 & 1 & 1 & 1 & 1 & 1 & 1 & 1 & 1 & 0 & 0 & 0 & 0 & 0 & 0 & 0 & 0 & 0 & 0 & 0 & 0 & 0 & 0 & 0 & 0 & 0 & 0 & 0 & 0 \\
    0 & 0 & 0 & 0 & 0 & 0 & 0 & 0 & 0 & 0 & 1 & 1 & 1 & 1 & 1 & 1 & 1 & 1 & 1 & 1 & 0 & 0 & 0 & 0 & 0 & 0 & 0 & 0 & 0 & 0 \\
    0 & 0 & 0 & 0 & 0 & 0 & 0 & 0 & 0 & 0 & 0 & 0 & 0 & 0 & 0 & 0 & 0 & 0 & 0 & 0 & 1 & 1 & 1 & 1 & 1 & 1 & 1 & 1 & 1 & 1 \\
    0 & 2 & 0 & 0 & 1 & 0 & 0 & 1 & 1 & 0 & 0 & 2 & 0 & 0 & 1 & 0 & 0 & 1 & 1 & 0 & 0 & 2 & 0 & 0 & 1 & 0 & 0 & 1 & 1 & 0 \\
    0 & 0 & 2 & 0 & 0 & 1 & 0 & 1 & 0 & 1 & 0 & 0 & 2 & 0 & 0 & 1 & 0 & 1 & 0 & 1 & 0 & 0 & 2 & 0 & 0 & 1 & 0 & 1 & 0 & 1 \\
    0 & 0 & 0 & 2 & 0 & 0 & 1 & 0 & 1 & 1 & 0 & 0 & 0 & 2 & 0 & 0 & 1 & 0 & 1 & 1 & 0 & 0 & 0 & 2 & 0 & 0 & 1 & 0 & 1 & 1 \\
    \end{smallmatrix} \right).
  \]
  The orthogonal complement of the rowspace of $A$ is then the rowspace of a $24 \times 30$ matrix $A^\perp$. The $A$-discriminant is in this case the condition on the Cayley octad obtained as the intersection of three general quadratic surfaces in 3-space (see \cite{quarticcurves}) to acquire a double point. Running $\texttt{TropLi}$ on the matrix $A^\perp$ we see that $\Phi(A^\perp)$ is a $24$-dimensional fan in $\R^{30}$ having $929$ rays and $154 \, 495 \, 683$ maximal cones. The computation of all these $150$ million cones takes a little more than $5$ hours. If instead we run $\texttt{TropLi}$ with the matrix $A$ as input and using the flag ``\texttt{-dual}'', the computation of $\Phi(A^\perp)$ takes less than $4$ hours.
\end{example}

\section{An Application: Computing A-Discriminants}\label{sectiondiscriminants}

Let $A$ be an $m \times n$ integer matrix of rank $m$ with columns ${\textbf a_1}, \dotsc, {\textbf a_n} \in \mathbb{Z}^m$, and suppose that the vector $(1,1, \dotsc,1)$ is in the rowspace of $A$. The columns of $A$ determine a collection of Laurent monomials ${\textbf x}^{\textbf a_1}, \dotsc, {\textbf x}^{\textbf a_n}$ in the ring $\C[x_1^{\pm 1},\dotsc,x_m^{\pm 1}]$ in a natural way. Consider the space $\C^A$ of all Laurent polynomials whose support is contained in this set of monomials, i.e., polynomials of the form $f(\textbf{x}) = \sum_{i=1}^n c_i \cdot {\textbf x}^{\textbf a_i}$, where the $c_i$s are complex coefficients. Let $\nabla_A \subseteq \C^A$ be the Zariski closure of the set of all $f$ in $\C^A$ that define a singular hypersurface in the torus $(\C^*)^m$, that is, for which there exists $\textbf{z} \in (\C^*)^m$ such that
\[
  f(\textbf{z}) = \frac{\partial f}{\partial x_i}(\textbf{z}) = 0 \,\, \text{ for all } i = 1,2,\dotsc, m.
\]
The variety $\nabla_A$ is an irreducible variety defined over $\Q$, called the \textbf{$A$-{dis\-cri\-mi\-nan\-tal} variety}. When $\nabla_A \subseteq \C^A$ is a subvariety of codimension $1$, the irreducible integral polynomial $\Delta_A$ in the coefficients of $f$ that defines $\nabla_A$ is called the \textbf{$A$-discriminant} ($\Delta_A$ is defined up to sign).

Of special interest is the case when $A$ is a general matrix whose first $s$ rows $r_1, \dotsc, r_s \in \mathbb{Z}^n$ are given by $r_j = \sum_{i \in I_j} e_i$ for some partition $\{I_1, \dotsc, I_s\}$ of $[n]$ (see examples \ref{exmixed1}, \ref{exmixed2}, and \ref{exmixed3}). In this case, the space $\C^A$ consists of polynomials of the form 
\[
f = x_1 \cdot f_1(x_{s+1}, \dotsc, x_m) + \dotsb + x_s \cdot f_s(x_{s+1}, \dotsc, x_m),
\]
where $f_j$ is a polynomial on the variables $\textbf{x}' = \{x_{s+1}, \dotsc, x_m\}$ whose support is contained in the set of monomials determined by the submatrix of $A$ with rows indexed by $\{s+1, \dotsc, m\}$ and columns indexed by $I_j$. The $A$-discriminantal variety is then the Zariski closure of the set of such polynomials $f_1(\textbf{x}'), \dotsc, f_s(\textbf{x}')$ that have a common root in the torus where their gradient vectors $(\partial f_j / \partial x)_{x \in \textbf{x}'}$ are linearly dependent. In the case where $s=2$ this corresponds to the condition on the polynomials $f_1, f_2$ for their corresponding hypersurfaces to be tangent. If $s = |\textbf{x}'|$ then this is the condition on the polynomials $f_1, \dotsc, f_s$ for the (finite) variety that they define to have a double point. In this case, the $A$-discriminant is also called their \textbf{mixed discriminant} (see \cite{mixed}).  If $s \geq |\textbf{x}'|+1$ and the matrix $A$ is \emph{essential} (see \cite{tropdisc}), then this is simply the condition on the polynomials $f_1, \dotsc, f_s$ for them to have a common root, so the $A$-discriminant is the same as their resultant. An extensive geometric treatment of all these notions can be found in \cite{GKZ}. 

Computing $A$-discriminants is in general a very hard computational task. Even for very small matrices $A$, the degree of $\Delta_A$ and its number of monomials can be quite large. From the definition, $A$-discriminants can in principle be computed by solving an elimination problem in the ring $\C[x_1^{\pm 1},\dotsc,x_m^{\pm 1}]$, but due to the huge size of these polynomials a Gr\"obner bases approach does not go too far. 

In \cite{tropdisc}, Dickenstein, Feichtner, and Sturmfels proposed a way of getting a handle on $A$-discriminants via tropical geometry. They proved that if $A^\perp$ denotes a Gale dual of the matrix $A$, i.e., an $(n-m) \times n$ matrix whose rowspace is equal to the orthogonal complement of the rowspace of $A$, then the tropicalization $\Trop(\nabla_A)$ of the variety $\nabla_A$ can be computed as the Minkowski sum of the tropical linear space $\Trop(M(A^\perp))$ and the rowspace of $A$. In the case where $\nabla_A$ has codimension $1$, this tropicalization $\Trop(\nabla_A)$ is equal to the $(n-1)$-dimensional skeleton of the normal fan of the Newton polytope $NP(\Delta_A)$ of $\Delta_A$. They used this to describe a ``ray shooting'' algorithm to recover vertices of $NP(\Delta_A)$ from $\Trop(\nabla_A)$, which goes as follows (see Theorem 1.2 in \cite{tropdisc}). Assume the columns of $A$ span the integer lattice $\mathbb{Z}^m$. Suppose $w$ is a generic vector in $\R^n$, and let $u \in \mathbb{Z}^n$ be the vertex of $NP(\Delta_A)$ minimizing the dot product $u \cdot w$. Then $u$ can be computed as
\begin{equation}\label{rayshoot}
 u_i = \sum_{\sigma \in \mathcal{C}_{i,w}} \left| \det (A^t, \sigma_1, \dotsc, \sigma_{n-m-1}, e_i) \right|,
\end{equation}
where $\mathcal{C}_{i,w}$ denotes the set of all maximal cones $\sigma$ in the nested set fan of $M(A^\perp)$ satisfying $(w + \R_{>0} \cdot e_i) \cap (\sigma + \rowspace A) \neq \emptyset$, and $\sigma_1, \dotsc, \sigma_{n-m-1}$ are the $0/1$ extremal rays of the cone $\sigma$ after modding out by its lineality space. An essential component in this procedure for computing vertices of $NP(\Delta_A)$ is to compute the tropical linear space $\Trop(M(A^\perp))$. It is possible, however, to replace in this formula the nested set fan of $M(A^\perp)$ by the cyclic Bergman fan $\Phi(A^\perp)$, which can be computed more easily.

Based on our implementation \texttt{TropLi} for computing cyclic Bergman fans, we developed a C++ code that computes vertices of Newton polytopes of $A$-discriminants in the way described above. Given an integer matrix $A$ and a vector $w \in \mathbb{Z}^n$, it computes a vertex $u$ of $NP(\Delta_A)$ minimizing the dot product $u \cdot w$. In the case where $w$ is not generic and this minimum is attained at several vertices of $NP(\Delta_A)$, the code uses a symbolic perturbation approach to compute one of these vertices at random. This software tool can also be obtained at the website 
\begin{center}
 \url{http://math.berkeley.edu/~felipe/tropli/} .
\end{center}

\begin{example}
  Consider the $4 \times 16$ matrix
  \[
  A = 
  \left(
  \begin{smallmatrix}
    1 & 1 & 1 & 1 & 1 & 1 & 0 & 0 & 0 & 0 & 0 & 0 & 0 & 0 & 0 & 0 \\
    0 & 0 & 0 & 0 & 0 & 0 & 1 & 1 & 1 & 1 & 1 & 1 & 1 & 1 & 1 & 1 \\
    0 & 0 & 1 & 0 & 1 & 2 & 0 & -1 & 0 & -2 & -1 & 0 & -3 & -2 & -1 & 0 \\
    0 & 1 & 0 & 2 & 1 & 0 & 0 & 0 & -1 & 0 & -1 & -2 & 0 & -1 & -2 & -3
  \end{smallmatrix}
  \right)
  \]
  Running our program with the matrix $A$ as input and the flag ``\texttt{-random 100}'' computes 100 random vertices of the Newton polytope of the $A$-discriminant $\Delta_A$. In this case, $\Delta_A$ is the condition on a general quadratic polynomial $f(x,y)$ and a general cubic polynomial $g(x,y)$ for the curves $f(x,y)=0$ and $g(x^{-1}, y^{-1})=0$ to be tangent. Our code computes the fan $\Phi(A^\perp)$ in the same way \texttt{TropLi} does (with the matrix $A$ as input and using the flag ``\texttt{-dual}''), and for each maximal cone $\sigma$ computed it checks if $\sigma + \rowspace(A) \subseteq \R^{16}$ has codimension $1$. If this is not the case then the cone $\sigma$ will not contribute to the sum in Equation \ref{rayshoot}. The cyclic Bergman fan $\Phi(A)$ has $18 \, 045$ maximal cones, $6 \, 675$ of which have codimension 1 after adding the rowspace of $A$. This initial computation takes $22$ seconds. The code then performs the ray shooting algorithm for 100 random values of the vector $w$ using the $6 \, 675$ cones computed before, and outputs the corresponding 100 vertices $u$ of the Newton polytope of $\Delta_A$. It also prints the $A$-degree of the $A$-discriminant $\Delta_A$, i.e., the vector $A \cdot u$ for $u$ any point of $NP(\Delta_A)$ (this does not depend on the choice of $u$). In this case, the $A$-degree is equal to $(24,22,-6,-6)$, so in particular we see that $\Delta_A$ is a homogeneous polynomial of degree $46$. This second part of the computation takes $7$ minutes.
\end{example}

Of course one would like to compute all the vertices of the Newton polytope $NP(\Delta_A)$. In general, however, due to the very large number of vertices of $NP(\Delta_A)$ and number of maximal cones in $\Trop(\nabla_A)$, one has to be quite clever about the way the vectors $w$ are chosen. Choosing them at random and waiting until all vertices of $NP(\Delta_A)$ have been computed is in general not viable. A great example illustrating all these difficulties and a few ways to overcome them is given in \cite{implicitization}. 

Very recently, an effective algorithm for recovering the normal fan of the Newton polytope of a polynomial from the support of its tropical hypersurface was proposed in \cite{tropicalresultants}. This algorithm has already been implemented in the software package Gfan \cite{gfan}. It can be used to take the description of $\Trop(\nabla_A)$ as a sum of a tropical linear space and a classical linear space and compute from it the normal fan of the Newton polytope of the $A$-discriminant $\Delta_A$. From this normal fan it is possible to recover the exact coordinates of the vertices of the Newton polytope of $\Delta_A$ by keeping track of the multiplicities of the codimension $1$ cones.

Now, even if we have a list of all the vertices of the Newton polytope $NP(\Delta_A)$, recovering the polynomial $\Delta_A$ is no easy task. One way to do it is to consider a generic polynomial whose monomials correspond to the lattice points in $NP(\Delta_A)$, and then imposing the condition that it vanishes on the image of the rational parametrization of $\nabla_A$ given in Proposition 4.1 of \cite{tropdisc}. This translates into a linear system of equations on the coefficients of this generic polynomial whose solution space corresponds to the coefficients of the $A$-discriminant $\Delta_A$. Note that, however, this procedure requires first computing all lattice points in the polytope $NP(\Delta_A)$ and then solving a very large system of linear equations, so a more effective approach would be desirable.

\section{Acknowledgements}
I am grateful to Bernd Sturmfels for conversations that led to this project and for useful comments on the presentation of this material. I would also like to thank the anonymous referees for carefully reading this manuscript and for their valuable suggestions.

\bibliographystyle{amsalpha}
\bibliography{../Bibliography/bibliography}

\end{document}